\documentclass[10pt]{amsart}

\usepackage{latexsym}
\usepackage{amsfonts,amssymb,amsmath,amsthm}
\usepackage{url}
\usepackage{enumerate}

\setlength\evensidemargin{.5in}
\setlength\textheight{44cc} \setlength\textwidth{30cc}
\setlength\topmargin{0in} \setlength\parskip{5pt}
\renewcommand{\epsilon}{\varepsilon}
\urlstyle{sf}
\newtheorem{thrm}{Theorem}[section]
\newtheorem{lem}[thrm]{Lemma}
\newtheorem{prop}[thrm]{Proposition}

\theoremstyle{definition}

\newtheorem{remark}[thrm]{Remark}
\numberwithin{equation}{section}
\newcommand{\Z}{{\mathbb Z}}
\newcommand{\N}{{\mathbb N}}

\renewcommand{\Pi}{\varPi}

\renewcommand{\epsilon}{\varepsilon}

\newcommand{\R}{{\mathbb R}}

\newcommand{\T}{\mathbb{T}}
\newcommand{\TT}{\mathbb{T}}
\newcommand{\Tn}{\mathbb{T}^n}
\newcommand{\btau}{\tau}
\newcommand{\lj}{{\lambda}_j}

\newcommand{\zed}{{\mathbb{Z}}}

\newcommand{\reals}{{\mathbb{R}}}

\author{Tayeb~A\"issiou}
\address{Department of Mathematics \\
McGill University\\
Montr\'eal, Qc.}
\email{tayeb.aissiou@mail.mcgill.ca}

\keywords{Semiclassical limits, Eigenfunctions of Laplacian on a torus, Quantum
limits}
\subjclass{Primary 58G25, Secondary 81Q50, 35P20, 42B05}
\date{\today}

\begin{document}
\title[Semiclassical limits of eigenfunctions on flat $n$-dimensional tori]
{Semiclassical limits of eigenfunctions\\
on flat $n$-dimensional tori }

\begin{abstract}
We provide a proof of the conjecture formulated in \cite{Jak97,JNT01} which
states that on a $n$-dimensional flat torus $\T^{n}$, the Fourier transform
of squares of the eigenfunctions $|\varphi_\lambda|^2$ of the Laplacian have
uniform $l^n$ bounds that do not depend on the eigenvalue $\lambda$. The proof
is a generalization of the argument by Jakobson, {\it et al}. for the
lower dimensional cases. These results imply uniform bounds for semiclassical
limits on $\TT^{n+2}$. We also prove a geometric lemma that bounds the number of
codimension-one simplices which satisfy a certain restriction on an
$n$-dimensional sphere $S^n(\lambda)$ of radius $\sqrt{\lambda}$ and use it in
the proof.
\end{abstract}

\maketitle

\section{Introduction}

We let $\Delta$ denote the Laplacian on the $n$-dimensional flat
torus $\T^n=\reals^n/\zed^n$.  The eigenvalues of $-\Delta$ are
denoted by $0=\lambda_0<\lambda_1\leq\lambda_2\leq\ldots$, and the
corresponding eigenfunctions are denoted by $\varphi_j$.  We normalize
$||\varphi_j||_2=1$.

The following Proposition was proved in \cite{Zyg74} for $n=2$, in
\cite{Jak97} for $n=3$, and in \cite{JNT01} for $n=4$.
\begin{prop}\label{prop:small:n}
Let $2\leq n\leq 4$. Then the Fourier series of $|\varphi_j|^2$ have
uniformly bounded $l^n$ norms, where the bound is independent of
$\lambda_j$.
\end{prop}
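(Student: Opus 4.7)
My plan is to rewrite $\|\widehat{|\varphi_j|^2}\|_{\ell^n}^n$ as a lattice-counting quantity on the sphere $E_\lambda:=\{\xi\in\Z^n:|\xi|^2=\lambda_j\}$ and exploit the arithmetic rigidity of $E_\lambda$ in small dimension. Expanding $\varphi_j=\sum_{\xi\in E_\lambda}a_\xi e^{2\pi i\xi\cdot x}$ with $\sum|a_\xi|^2=1$, the $\eta$-th Fourier coefficient of $|\varphi_j|^2$ is
\[
b_\eta\;=\;\sum_{\substack{\xi,\xi'\in E_\lambda\\\xi-\xi'=\eta}}a_\xi\,\overline{a_{\xi'}},
\]
so $b_0=1$ and, by Cauchy--Schwarz, $|b_\eta|^2\leq N_\eta\,s_\eta$ with $N_\eta:=\#\{(\xi,\xi')\in E_\lambda^2:\xi-\xi'=\eta\}$ and $s_\eta:=\sum_{\xi-\xi'=\eta}|a_\xi|^2|a_{\xi'}|^2$, which satisfy $\sum_\eta s_\eta=1$.

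For $n=2$ (Zygmund) the argument is immediate: two distinct circles in $\R^2$ meet in at most two points, so $N_\eta\leq 2$ for $\eta\neq 0$, and $\sum_\eta|b_\eta|^2\leq 1+2\sum_{\eta\neq 0}s_\eta\leq 3$ independently of $\lambda_j$.

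For $n=3$ and $n=4$ the slice $S^{n-1}(\sqrt{\lambda_j})\cap(S^{n-1}(\sqrt{\lambda_j})+\eta)$ is a positive-dimensional sphere, so $N_\eta$ is no longer uniformly bounded and the naive argument breaks. I would dyadically stratify the frequencies by the size of $N_\eta$, apply Cauchy--Schwarz/AM--GM on the $a_\xi$'s within each stratum to absorb the $|a|^2$ factors into $\sum|a_\xi|^2=1$, and control the number of $\eta$'s in each stratum by a lattice-point count on the common-difference variety. The latter is an intersection of spheres whose dimension stays low in $\R^n$ for $n\leq 4$; classical representation-number bounds (in particular $r_2(m)=O(m^\epsilon)$ and divisor-sum estimates for $r_3(m)$) supply the arithmetic input, and the dyadic sum closes to $O(1)$ uniformly in $\lambda_j$.

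The hard step is precisely this arithmetic count: the cardinality of $\{\eta:N_\eta\geq M\}$ must decay quickly enough in $M$ to beat the $n$-th power in $\sum|b_\eta|^n$, and the balance holds for $n\leq 4$ exactly because the obstructing varieties are of dimension $\leq 1$ or carry very few lattice points. Beyond $n=4$ these ad hoc counts must be replaced by the more refined geometric estimate on codimension-one simplices proved later in the paper.
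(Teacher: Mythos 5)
Your $n=2$ argument is complete and correct: it is exactly Zygmund's, resting on the fact that two congruent circles with distinct centres meet in at most two points, so $N_\eta\leq 2$ for $\eta\neq 0$ and $\sum_\eta|b_\eta|^2\leq 1+2\sum_{\eta\neq 0}s_\eta\leq 3$. The difficulty is the $n=3,4$ part, which is where all the content of the proposition lies (the paper itself only cites \cite{Jak97} and \cite{JNT01} for these cases), and there your sketch has a genuine gap. The quantity $N_\eta=\#\bigl(E_\lambda\cap(E_\lambda+\eta)\bigr)$ counts lattice points on a circle (for $n=3$) or a $2$-sphere (for $n=4$), and this is genuinely unbounded as $\lambda_j\to\infty$; the estimates you invoke, such as $r_2(m)=O(m^\epsilon)$, grow with $\lambda$ and so cannot by themselves yield a constant independent of $\lambda_j$. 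For your dyadic stratification to close you would need the ($s_\eta$-weighted) size of $\{\eta:N_\eta\geq M\}$ to decay in $M$ fast enough to beat $M^{n/2}$, uniformly in $\lambda_j$; you assert this balance rather than prove it, and I do not see how to extract it from representation-number bounds alone, since after Cauchy--Schwarz each stratum contributes roughly $M^{n/2}\sum_{N_\eta\sim M}s_\eta^{n/2}$ and the only a priori control available is $\sum_\eta s_\eta=1$.

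The proofs in the cited sources, and the paper's own generalization to $n\geq5$, avoid bounding $N_\eta$ altogether. The key move is to write $|b_\tau|^n$ as a sum over $n$-tuples of chords, apply the AM--GM bound $\prod_i|a_{\xi_i}||a_{\eta_i}|\leq\frac12\bigl(\prod_i|a_{\xi_i}|^2+\prod_i|a_{\eta_i}|^2\bigr)$, and then \emph{interchange the order of summation}: fix the $n$-tuple $(\xi_1,\dots,\xi_n)$ and count the number of difference vectors $\tau$ for which all the translates $\xi_i\mp\tau$ land back on the sphere. By strict convexity of the sphere this count is at most $2^{n-1}$ for a non-degenerate configuration (Lemma \ref{lem:geometric}), uniformly in $\lambda_j$, and degenerate configurations are handled by passing to great subspheres and inducting on dimension. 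This ``dual'' count is bounded by an absolute constant precisely where your ``primal'' count $N_\eta$ is not; that interchange is the missing idea in your proposal.
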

We remark that it is well-known that the {\em multiplicity} of
$\lambda_j$ becomes unbounded for $n\geq 2$, and therefore so does
$||\varphi_j||_\infty$.

It was conjectured in \cite{Jak97} that the conclusion of
Proposition \ref{prop:small:n} holds for arbitrary $n$.  The main
result of this paper is the proof of that conjecture:

\begin{thrm}\label{thm:main}
For any $n\geq 5$, there exists $C=C_n<\infty$, such that for every
eigenfunction $\Delta\varphi_j+\lambda_j\varphi_j=0,$ $||\varphi_j||_2=1$, the
Fourier series of $g:=|\varphi_j|^2$ satisfies
\begin{eqnarray}\label{ln:main}
\left\| \widehat{g}\right\|_{l^n} \leq
C(n)||\varphi_j||_2^2\nonumber
\end{eqnarray}
\end{thrm}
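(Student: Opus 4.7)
The plan is to follow the scheme of Zygmund, Jakobson, and Jakobson--Nadirashvili--Toth in low dimensions: represent $\|\widehat g\|_{l^n}^n$ as a sum over $2n$-tuples of lattice points on the spectral sphere subject to a shared-difference constraint, and then estimate that sum by geometric/arithmetic counting. Write $\varphi_j(x) = \sum_{k \in \Lambda_\lambda} a_k e^{2\pi i k \cdot x}$ with $\Lambda_\lambda = \Z^n \cap S^n(\sqrt{\lambda_j})$ and $\sum_k|a_k|^2 = 1$, so that
\[
\widehat g(\xi) = \sum_{\substack{k,k' \in \Lambda_\lambda \\ k-k'=\xi}} a_k\overline{a_{k'}}.
\]
Applying the triangle inequality, raising to the $n$-th power, and summing in $\xi$ yields
\[
\sum_\xi |\widehat g(\xi)|^n \;\leq\; \sum_{\mathcal{C}_n} \prod_{i=1}^n |a_{k_i}|\,|a_{k'_i}|,
\]
where $\mathcal{C}_n$ is the set of $2n$-tuples $(k_1,k'_1,\ldots,k_n,k'_n) \in \Lambda_\lambda^{2n}$ for which the $n$ differences $k_i-k'_i$ all agree. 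Calling this common difference $\xi$, the identity $|k_i|^2=|k'_i|^2=\lambda_j$ forces each $k_i$ into the hyperplane $H_\xi = \{x : 2x\cdot\xi = |\xi|^2\}$, so that $\{k_1,\ldots,k_n\}$ lies on the $(n-2)$-dimensional lattice sphere $\Lambda_\lambda \cap H_\xi$.

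I would then stratify the sum over $\mathcal{C}_n$ by the affine dimension $d$ of $\{k_1,\ldots,k_n\}$. When $d \leq n-2$ the points fail to span $H_\xi$ and live in a proper affine sublattice; after restriction and rescaling the contribution of each such degenerate stratum becomes the analogous quantity on a sphere in a lower-dimensional integer lattice, and I would control it by induction on $n$, with Proposition~\ref{prop:small:n} furnishing the base cases $n \leq 4$. The essential new case is the principal stratum $d = n-1$, in which $\{k_1,\ldots,k_n\}$ are the vertices of a codimension-one simplex spanning $H_\xi$ whose common translate by $-\xi$ still lies on $S^n(\sqrt{\lambda_j})$; this is precisely the configuration controlled by the geometric lemma announced in the abstract. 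Given that lemma, the principal contribution is bounded by applying AM-GM (or iterated Cauchy--Schwarz) to $\prod_i |a_{k_i}|\,|a_{k'_i}|$ to absorb the multiplicities of individual $k \in \Lambda_\lambda$ into $\sum_k |a_k|^2 = 1$.

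The main obstacle, and where the argument genuinely departs from the $n \leq 4$ case, is the proof of the geometric lemma itself. For small $n$, crude bounds on the number of lattice points on $(n-2)$-dimensional hyperplane sections suffice to close the sum, but for $n \geq 5$ the naive count grows too fast. My plan is to exploit the rigidity of having two parallel hyperplane sections of $S^n(\sqrt{\lambda_j})$ that share $n$ affinely independent lattice vertices: this is an extremely constrained configuration, essentially a system of quadratic Diophantine equations tying the translation vector $\xi$ to the simplex. Combining a bound on the number of admissible $\xi$ with a determinantal (Minkowski-type) estimate on how many affinely independent lattice $n$-tuples fit inside a single hyperplane section should yield the lemma, after which the remaining estimates are routine.
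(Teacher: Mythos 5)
Your overall architecture coincides with the paper's: the same expansion of $\sum_\tau|b_\tau|^n$ over $2n$-tuples with a common difference, the same AM--GM step replacing $\prod_i|a_{\xi_i}||a_{\eta_i}|$ by $\tfrac12\bigl(\prod_i|a_{\xi_i}|^2+\prod_i|a_{\eta_i}|^2\bigr)$, the same interchange of summation so that the $L^2$ normalization absorbs the sum over simplices once the number of admissible $\tau$ per simplex is bounded, and the same stratification by the affine dimension of $\{\xi_1,\dots,\xi_n\}$ with induction handling the degenerate strata. The genuine gap is that you leave Lemma~\ref{lem:geometric} unproved and propose to attack it by counting solutions of quadratic Diophantine systems together with a ``determinantal (Minkowski-type) estimate'' on lattice $n$-tuples in a hyperplane section. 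That route is not the paper's and is unlikely to close: any count of lattice points (or of affinely independent lattice tuples) on a section of ${\bf S}^{n-1}(\lambda_j)$ grows with $\lambda_j$, whereas the entire theorem hinges on the bound being \emph{independent} of the eigenvalue (Remark~\ref{rem:evalind}). The paper's proof of the lemma uses no arithmetic at all: for a fixed non-degenerate simplex $\{\xi_i\}$ and a fixed assignment of signs $\xi_i-\eta_i=\pm\tau$, the conditions that $\eta_1+\overline{\xi_1\xi_i}$ (with $\xi_i$ replaced by $\xi_i'$ where the sign is reversed) all lie on the sphere pin down $\eta_1$ — hence $\tau$ — uniquely by strict convexity and the affine independence of the vertices; summing $\binom{n}{k}$ over the $k$ reversed signs and halving for the symmetry $\tau\mapsto-\tau$ gives exactly $2^{n-1}$. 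Your diagnosis that this lemma is ``where the argument genuinely departs'' is right, but the mechanism you propose for it is the wrong one, and without the convexity argument the proof does not close.

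A secondary, smaller gap: your treatment of the degenerate strata as ``after restriction and rescaling \dots the analogous quantity on a sphere in a lower-dimensional integer lattice'' is too quick. A proper affine section of ${\bf S}^{n-1}(\lambda_j)$ is in general a \emph{small} sphere, not a great sphere, so it is not directly the lattice sphere of a lower-dimensional instance of the problem. The paper resolves this with a case analysis on whether some $-\eta_i$ (or $\eta_i$) leaves or stays in the affine subspace $\alpha_{n-k}$: in one case the pair $(\xi_1,\eta_1)$ can be swapped for $(-\eta_1,-\xi_1)$ to strictly increase the affine dimension (feeding a second, reversed induction on codimension), and in the other case the subspace is forced to contain the origin, making the section a genuine great sphere to which the first induction on $n$ applies. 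Some version of this dichotomy is needed to make your induction on the degenerate strata legitimate.
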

We stress that the bound $C$ does not depend on the eigenvalue
$\lambda_j$.   The bound $C(n)$ is computed at the end of the
proof and tends to 2 as $n\to\infty$.

Theorem \ref{thm:main} implies (by an argument in \cite{Jak97}) a statement
about limits of eigenfunctions on $\T^{n+2}$. Consider weak limits of the
probability measures $d\mu_j=|\varphi_j|^2dx$, and denote the limit measure as
$\lj\to\infty$ by $d\nu$, one can prove that all such limit measures $d\nu$ are
absolutely continuous in any dimension with respect to the Lebesgue measure on
$\T^n$ (Cf. \cite{Jak97}). Accordingly, by Radon-Nikodym theorem, one can
conclude that $d\nu$ has a density $h(x)\in L^1(\T^n)$ such that
$d\nu=h(x)\,dx$. Then, we consider the Fourier expansion of $h(x)$:
\begin{equation}
\label{eq:defh(x)} h(x)\sim\sum_{\tau\in\Z^n}c_\tau\;e^{i(\tau,x)}
\end{equation}

In dimension $n=2$, it was shown in \cite{Jak97} that the density of
every such limit is a trigonometric polynomial with at most {\it
two} different magnitudes for the frequency. It was also shown
in\cite{Jak97,JNT01} that on $\T^n$ for $3\leq n\leq 6$, the Fourier
expansion of the limit measure $d\nu$ is in $l^{n-2}$, that is,
\begin{equation}\label{smalln:lim:bound}
\sum_{\tau\in\Z^n}|b_\tau|^{n-2}<\infty.
\end{equation}
The proofs in dimensions $4\leq n\leq 6$ used Proposition
\ref{prop:small:n} and results in \cite{Jak97} that reduced
estimates for limits on $\T^{n+2}$ to estimates for eigenfunctions
on $\T^n$. The estimate \eqref{smalln:lim:bound} implies that on
$\T^3$, the density of any limit $d\nu$ has an absolutely convergent
Fourier series, whereas on $\T^4$, we conclude that $h(x)\in
L^2(\T^4)$.

Combining Theorem \ref{thm:main} with the results in \cite{Jak97},
we immediately obtain
\begin{thrm}
\label{thm:limits} Given the Fourier expansion \eqref{eq:defh(x)} of
the limit measure $d\nu$ on $\T^{n+2}$, we have
\begin{equation}
\left(\sum_{\tau\in\Z^{n+2}}|b_\tau|^{n}\right)^{1/n}\leq C(n)<\infty
\end{equation}
\end{thrm}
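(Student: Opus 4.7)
The plan is to apply the reduction of \cite{Jak97} that converts statements about weak limits of $|\varphi_j|^2$ on $\T^{n+2}$ into statements about Fourier series of squared eigenfunctions on $\T^n$, and then feed in Theorem \ref{thm:main}. Since the paper notes that Theorem \ref{thm:limits} follows immediately once Theorem \ref{thm:main} is in hand, the real content lies in verifying that the reduction worked out in \cite{Jak97,JNT01} for $n \leq 4$ goes through dimension-insensitively.

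The reduction exploits the product splitting $\T^{n+2} = \T^n \times \T^2$. Any eigenfunction $\varphi$ on $\T^{n+2}$ with eigenvalue $\lambda$ expands as a sum of plane waves $e^{i(\xi,z)}$ with $|\xi|^2 = \lambda$; grouping these by their projection $\eta \in \Z^2$ onto the $\T^2$ factor yields
\[
\varphi(x,y) = \sum_{\eta \in \Z^2,\ |\eta|^2 \leq \lambda} e^{i(\eta,y)}\, \psi_\eta(x),
\]
where each $\psi_\eta$ is a finite sum of plane waves on $\T^n$ of common squared length $\lambda - |\eta|^2$, hence (after normalization) an eigenfunction of $-\Delta$ on $\T^n$ to which Theorem \ref{thm:main} applies. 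The Fourier coefficient of $|\varphi|^2$ at $(\sigma,\rho) \in \Z^n \times \Z^2$ is the $\eta$-sum of the $\sigma$-Fourier coefficients of the products $\psi_\eta \overline{\psi_{\eta-\rho}}$ on $\T^n$.

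I would next pass to a weak subsequential limit $d\mu_j \to d\nu$ along some $\lambda_j \to \infty$, so that the coefficient $b_{(\sigma,\rho)}$ of $d\nu$ is the limit of these sums. A pointwise Cauchy--Schwarz bound on the $\T^n$ side controls each coefficient of $\psi_\eta \overline{\psi_{\eta-\rho}}$ at $\sigma$ by a geometric mean of coefficients of $|\psi_\eta|^2$ and $|\psi_{\eta-\rho}|^2$; combining this with Minkowski in $l^n$ and the Parseval identity $\sum_\eta \|\psi_\eta\|_2^2 = \|\varphi\|_2^2 = 1$ pools the $\eta$-blocks into a single $l^n$ estimate on the $(\sigma,\rho)$ grid. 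Applying Theorem \ref{thm:main} to each normalized $\psi_\eta$ then gives $\|\widehat{|\psi_\eta|^2}\|_{l^n} \leq C(n)\|\psi_\eta\|_2^2$ uniformly in $\lambda - |\eta|^2$, which closes the estimate.

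The main obstacle in this plan is the dimensional bookkeeping: the reduction converts an $l^n$ bound on $\T^n$ into an $l^n$ bound on $\T^{n+2}$ for $d\nu$ rather than an $l^{n+2}$ bound, which is exactly why Theorem \ref{thm:limits} loses two dimensions relative to Theorem \ref{thm:main}. Verifying that the weak-limit procedure contracts pairs of squared eigenfunction coefficients into a single density with the correct exponent is the one careful step required, but the argument is carried out in \cite{Jak97,JNT01} in the low-dimensional cases and extends verbatim, since nothing in that step used $n \leq 4$.
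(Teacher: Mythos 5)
The paper does not actually prove Theorem \ref{thm:limits}; it invokes it as an immediate consequence of Theorem \ref{thm:main} together with the reduction established in \cite{Jak97} (a generalization of Connes' theorem), which converts bounds for the Fourier coefficients of \emph{limit} densities on $\T^{n+2}$ into bounds for Fourier coefficients of $|\varphi_j|^2$ on $\T^n$. Your proposal attempts to reconstruct that reduction, and the reconstruction you give is not the one in \cite{Jak97} and, as sketched, does not work.

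The concrete problem is that your chain of estimates (splitting $\T^{n+2}=\T^n\times\T^2$, Cauchy--Schwarz on each block, Minkowski in $l^n$, Parseval in $\eta$) makes no essential use of the passage to the weak limit: if it closed, it would give a \emph{uniform in $\lambda$} bound $\|\widehat{|\varphi|^2}\|_{l^n(\Z^{n+2})}\leq C$ for every individual eigenfunction on $\T^{n+2}$. That statement is false --- already for $n=2$ it would assert a uniform $l^2(\Z^4)$ bound for eigenfunctions on $\T^4$, strictly stronger than the $l^4$ bound of Proposition \ref{prop:small:n}, and for the degenerate case $\T^2$ it would force $|\varphi|^2$ to have a bounded number of nonzero coefficients, contradicting the unbounded multiplicities. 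The gain of two exponents is a phenomenon of the limit $\lambda_j\to\infty$, not of a fixed-$\lambda$ inequality. Tracing your sketch, the estimate fails at two places: (i) Theorem \ref{thm:main} bounds $\widehat{\psi\bar\psi}$ for a \emph{single} eigenfunction $\psi$, whereas your blocks produce cross terms $\psi_\eta\overline{\psi_{\eta-\rho}}$ with two \emph{different} eigenvalues $\lambda-|\eta|^2\neq\lambda-|\eta-\rho|^2$, to which the theorem does not apply and for which polarization is not automatic in the $l^n$ quasi-norm; and (ii) even granting a bound of the form $C(n)\|\psi_\eta\|_2\|\psi_{\eta-\rho}\|_2$ for each block, summing the $n$-th powers over the $\Z^2$-frequency $\rho$ contributes a factor on the order of the number of admissible $\rho$, which grows like $\lambda$. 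Closing (ii) is exactly what the Connes-type rigidity argument of \cite{Jak97} accomplishes: in the limit, only frequencies supported on asymptotically degenerate configurations of lattice points survive, and that is where the two dimensions are gained. Your final paragraph correctly identifies this as ``the one careful step required,'' but the claim that it ``extends verbatim'' is precisely the content being assumed rather than proved.
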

A generalization of B. Connes' result \cite{Con76} proved in
\cite{Jak97} shows that the constant $C(n)$ appearing in theorem in
theorem \ref{thm:limits} on $\T^{n+2}$ coincides with the constant
in \ref{thm:main} on $\T^n$. The bound $C(n)$ will be computed at
the end of the proof and we will find that it tends to 2 as
$n\to\infty$.

An important question about eigenfunctions of the Laplacian is the following:
given $\varphi(x)$ satisfying $\Delta\varphi_j+\lj\varphi_j=0$ and
$||\varphi||_2=1$ on a general $n$-dimensional smooth Riemannian manifold
$\mathcal{M}$, what is the asymptotic growth rate of the $L^p$ norms of the
eigenfunction? That is, how fast does $||\varphi_j||_{L^p}$ grow as the
eigenvalue $\lj\to\infty$.  

On a two dimensional compact boundaryless Riemannian manifold, Sogge showed in
\cite{Sog88} that for $2\leq p\leq\infty$:
\begin{equation}
\label{eq:Sogge}
||\varphi_{j}||_p\leq C\lambda_j^{\delta(p)}||\varphi_{j}||_2
\end{equation}
where
\begin{equation}
\delta(p) =
\begin{cases}
\frac14\left(\frac12-\frac1p\right), & 2\leq p\leq6 \\
\frac12\left(\frac12-\frac2p\right), & 6\leq p\leq\infty
\end{cases}
\end{equation}
This bound turned out to be sharp on the round sphere $S^2$.

In a remarkable result, Zygmund \cite{Zyg74} provides a uniform bound for the
$L^4$-norm of the eigenfunctions of the Laplacian on $\T^2$. That is,
\begin{equation}
\label{eq:Zyg74}
\frac{||\varphi||_4}{||\varphi||_2}\leq5^{1/4}
\end{equation}
The bound \eqref{eq:Zyg74} provided in \cite{Zyg74} is independent of the
eigenvalue.

Before we mention the next result, we give the following definition:
\begin{equation}
\label{eq:Def}
M_{n,p}(\lambda):=\sup_{(\Delta+\lambda)\varphi=0 \atop
\varphi\text{ on }\T^n}\frac{||\varphi||_p}{||\varphi||_2 }
\end{equation}
The question of the growth rate mentioned earlier can be translated into, what
is the asymptotic behavior of $M_{n,p}(\lambda)$. It is sometimes possible to
obtain uniform bounds (independent of $\lambda$) for $M_{n,p}(\lambda)$ for a
restricted set of eigenvalues. 

In particular, Mockenhaupt proved in \cite{Moc96} the following: given a finite
subset $D=\{q_1,q_2,\ldots,q_k\}$ of prime integers with $q_j\equiv1$
$(\text{mod } 4)$, we consider the set $\Lambda_D$ consisting of all eigenvalues
$\lambda\in\N$ such that all prime divisors $q$ of $\lambda$ with the property
$q\equiv1$ $(\text{mod } 4)$, belong to $D$. Then, for all $\lambda\in\Lambda_D$
and for all $p<\infty$, we have $M_{2,p}\leq C(p,k)<\infty$, where $C(p,k)$ is a
constant.

A legitimate question to ask is whether or not there exists a uniform bound for
$M_{n,p}$ for general $n$ and $p$. The question is still open,
although there exist results about the rate of growth of $M_{n,p}(\lambda)$ as
$\lambda\to\infty$. Bourgain showed in \cite{Bou93} that on $\T^n$ with $n
\geq4$, we have $M_{n,p}\ll\lambda^{(n-2)/4-n/2+\varepsilon}$  for $p\geq
2(n+1)/(n-3)$.


We notice that theorem \ref{thm:main} does not imply a bound on eigenfunctions
since there is no converse to Hausdorff-Young inequality. For $1<p\leq2\leq
q<\infty$ with $p^{-1}+q^{-1}=1$, we have:
\begin{equation}
\label{eq:HYI}
||b_\btau||_{l^q}\ll ||\varphi||_{L^{2p}}^{2}.
\end{equation}

Although the bound $C(n)$ from Theorem \ref{thm:main} does not depend on the
eigenvalue $\lambda$, it does not give us information about the bound $M_{n,p}$
in \eqref{eq:Def}.

In recent papers \cite{BR09,BR10}, J. Bourgain and Z. Rudnick considered upper
and lower bounds for the $L^p$ norms of the the restriction of
eigenfunctions of Laplacian to smooth hypersurfaces of $\TT^n$ with nonvanishing
curvature for $n=2,3$ . They showed that 
\begin{equation}
\nonumber
||\varphi_\lambda||_{L^2(\Sigma)}\asymp||\varphi_\lambda||_2
,
\end{equation}
for all eigenfunctions $\varphi_\lambda$ of the Laplacian on $\TT^n$ with
$\lambda\geq\Lambda$ for some $\Lambda$ that depends only on the hypersurface
$\Sigma$.

There exist bounds for the $L^\infty$ norm of the eigenfunctions as well.
H\"ormander showed (cf. \cite{H-IV83,H2}) that on any compact Riemannian
manifold $M$, we have  
\begin{equation}
\label{Hormander}\nonumber
||\varphi_\lambda||_\infty \leq C\,\lambda^{\frac{n-1}{4}},
\end{equation}
where $n$ is the dimension of the manifold $M$. This bound is attained for some
manifolds such as $S^{n}$, but not for others such as $\TT^n$. Manifolds for
which this bound is sharp are called manifolds with {\it maximal eigenfunction
growth}. 

Y. Safarov studied the asymptotic behavior of the spectral function, the
remainder in Weyl's law, and of eigenfunctions in many papers including
\cite{Saf1,Saf2}.

C. Sogge, J. Toth and S. Zelditch studied, in a series of papers
(Cf. \cite{STZ09,SZ02,TZ02}) the following question: what characterizes
the manifolds with maximal eigenfunction growth?  

They established that the manifolds with maximal eigenfunction growth must have
a point $x$ where the set of geodesic loops at that point has a positive measure
in $S_x^*M$. The converse turned out to be false as they constructed a 
counterexample in \cite{SZ02}.

An older question of the same type is: how fast does the spectral
function and the remainder term in Weyl's formula grow as $\lambda\to\infty$?
The spectral function is given by:
\begin{equation}
\label{eq:specfunc}
N_{x,y}(\lambda)=\sum_{0<\sqrt{\lambda_j}<\sqrt{\lambda}}
\varphi_j(x)\overline{\varphi_j(y)}
\end{equation}
If we consider the diagonal when $x=y$, we obtain $N_{x,x}(\lambda)$. If we
integrate the latter over the volume of the manifold $M$ (assumed to be
compact), we obtain the eigenvalue counting function $N(\lambda)$ defined by:
\begin{equation}
N(\lambda)=\#\lbrace{{\lambda_i}<\lambda\rbrace}.
\end{equation}
The remainder term in Weyl's formula is given by:
\begin{equation}
\label{eq:Weylrem}
R(\lambda)=N(\lambda)-c_n\text{ vol} (M)\,\lambda^{n/2},
\end{equation}
where $c_n$ is a constant that depends on the dimension $n$. 

The asymptotic behavior of the spectral function and the remainder term were
studied by many people, cf. \cite{Av, DG, H2, Lev, Safbook} and the
references therein for a detailed exposition of the subject.

The results of this paper appear in \cite{Ais09}.

\noindent{\bf Acknowledgements:}
I would like to thank professor Dmitry Jakobson for pointing out this problem to me, for the stimulating conversations we had, as well for his moral support. The author was partially supported by FQRNT.

\section{Proof of the main result}
Let us define the notation that will be used throughout the argument. For
$\varphi_j(x)$, an $L^2$-normalized eigenfunction of the Laplacian on an
$n$-dimensional torus $\Tn=\R^n/\Z^n$ with eigenvalue $\lj$, we let its
Fourier expansion be:
\begin{eqnarray}
\varphi_j(x)&\sim&\sum_{\eta\in\Z^n\atop|\eta|^2={\lj}}a_\eta\,e^{i(x,\eta)}
\label{eq:taylorphi}
\nonumber
\end{eqnarray}
The Fourier series of $g(x)=|\varphi_j(x)|^2$ (recall the definition from the
introduction) is as follows:
\begin{eqnarray}
\label{eq:fourexp}
|\varphi_j(x)|^2&\sim&\sum_{\btau=\xi-\eta\atop|\xi|^2=|\eta|^2=\lj}\,b_\btau
\,e^{i(x,\btau)} \label{eq:fouphipsi1}\\
b_\btau&=&\sum_{\xi-\eta=\btau\atop |\xi|^2=|\eta|^2=\lambda_j} \,a_\xi
\bar{a}_\eta
 \\
\label{ddf}
\sum_{\eta\in\Z^n\atop |\eta|^2=\lj}{|a_\eta|^2}&\equiv&1
\label{eq:normalizationphi}
\end{eqnarray}

We will write ${\bf S}^{n-1}(\lj)$ for the $(n-1)$-sphere of radius
$\sqrt{\lj}$ and  $S_{n-1,\lj}$ for the set of lattice points on
${\bf S}^{n-1}(\lj)$.

In the spirit of this new notation, the last three equations may be written
as follows:
\begin{eqnarray}
\label{eq:fourexp'}
|\varphi_j(x)|^2&\sim&\sum_{\btau=\xi-\eta\atop\xi,
\eta\,\in S_{n-1,\lj}}\,b_\btau
\,e^{i(x,\btau)} \label{eq:fouphipsi2}\\
b_\btau&=&\sum_{\xi, \eta\,\in S_{n-1,\lj}\atop\xi-\eta=\btau}
\,a_\xi \bar{a}_\eta \\
\label{ddf2}
\sum_{\eta\,\in S_{n-1,\lj}}{|a_\eta|^2}&\equiv&1 \label{eq:normalizationphi1}
\end{eqnarray}

We can assume, without loss of generality, the coefficients $a_\xi$ to be real
and then we have $|a_\xi|=|\bar a_\xi|=|a_{-\xi}|$. For the case where
$\btau=\bf0$, we have:
\begin{eqnarray}
\label{eq:normzeroterm}
b_{\bf0}=\sum_{0=\btau=\xi-\eta}a_\xi \bar{a}_{\eta}=\sum_{\xi\,
\in S_{n-1,\lj}}|a_\xi|^2=1 .
\end{eqnarray}
The proof of Theorem \ref{thm:main} requires a lemma that will be
proved at the end of this section.
\begin{lem}
\label{lem:geometric}
Given $n$ points $\{\xi_i\}_{i=1}^n$ on ${\bf S}^{n-1}(\lj)\cap\Z^n$, no two of
which are diametrically opposite, that form codimension-one simplex, assume that
there exists $\btau\in\Z^n$ and another $n$ points $\{\eta_i\}_{i=1}^n$ on ${\bf
S}^{n-1}(\lj)\cap\Z^n$ such that 
\begin{equation}
\label{par:translate}
\xi_i-\eta_i=\pm\tau,\qquad \forall 1\leq i\leq n.
\end{equation}
Then, there can be at most $2^{n-1}$ such {\it different} vectors $\btau$
satisfying \eqref{par:translate}.
\end{lem}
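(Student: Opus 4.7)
The strategy is to associate to each valid $\btau$ a sign pattern $\epsilon(\btau) \in \{\pm 1\}^n$ and exploit the $\btau \leftrightarrow -\btau$ symmetry. Since $\xi_i,\eta_i$ both lie on ${\bf S}^{n-1}(\lj)$ and $\xi_i - \eta_i = \epsilon_i \btau$ with $\epsilon_i \in \{+1,-1\}$, expanding the identity $|\eta_i|^2 = |\xi_i|^2 = \lj$ gives the linear-quadratic relation
\begin{equation*}
\xi_i \cdot \btau \;=\; \epsilon_i \, \frac{|\btau|^2}{2}, \qquad 1 \le i \le n.
\end{equation*}
For any nonzero $\btau$, none of the inner products $\xi_i\cdot\btau$ vanish (else $|\btau|^2 = 0$), so the sign pattern $\epsilon(\btau) := (\sgn(\xi_i \cdot \btau))_{i=1}^n$ is well-defined and one checks at once that $\epsilon(-\btau) = -\epsilon(\btau)$, so $\btau$ and $-\btau$ always map to antipodal patterns.

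The key step would be to show that the induced map $\{\pm\btau\}\mapsto\{\pm\epsilon(\btau)\}$ is injective; this then gives the bound since $\{\pm1\}^n$ splits into exactly $2^{n-1}$ antipodal classes. Fix a sign pattern $\epsilon$ and write $c := |\btau|^2/2$; the equations become $\xi_i \cdot \btau = \epsilon_i c$. In the generic situation where the affine span of $\{\xi_i\}$ misses the origin, the $\xi_i$ form a basis of $\R^n$: letting $A$ be the $n\times n$ matrix whose rows are $\xi_i^T$ and $u := A^{-1}\epsilon$, one obtains $\btau = c u$, and the quadratic constraint $|\btau|^2 = 2c$ forces the unique nontrivial value $c = 2/|u|^2$, pinning $\btau$ down up to sign. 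Thus at most one $\pm$-class of $\btau$ arises per $\pm$-class of $\epsilon$, proving the lemma in this regime.

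The principal obstacle is the degenerate case in which the affine span of the $\xi_i$ passes through the origin; here the $\xi_i$ are linearly dependent and satisfy a primitive integer relation $\sum_i a_i \xi_i = 0$. The no-antipodal hypothesis rules out any two-term relation (which, since $|\xi_i|^2=|\xi_j|^2$, would force $\xi_i = -\xi_j$), so at least three of the $a_i$ are nonzero. For a sign pattern $\epsilon$ to produce a nontrivial $\btau$ one now needs the consistency condition $\sum a_i \epsilon_i = 0$, and the resulting linear system admits a one-parameter family of real solutions parameterized by the one-dimensional $H^\perp$, where $H := \mathrm{span}(\xi_i)$: explicitly, $\btau = c\,v(\epsilon) + t\,u$ with $u$ a unit normal to $H$ and $c^2|v(\epsilon)|^2 + t^2 = 2c$, an ellipse in $\mathrm{span}(v(\epsilon),u)$. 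Bounding the feasible sign patterns via the three-term lower bound forced by non-antipodality, and counting the lattice points on this ellipse, is what must be combined to retain the bound $2^{n-1}$ in this regime; this degenerate analysis is the technical heart of the argument.
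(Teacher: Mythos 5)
Your reduction to the linear system $\xi_i\cdot\tau=\epsilon_i\,|\tau|^2/2$ and the stratification by the sign pattern $\epsilon\in\{\pm1\}^n$ (with the antipodal identification supplying the factor $\tfrac12\cdot 2^{n}=2^{n-1}$) is sound, and it is in substance the same combinatorial count as the paper's, which indexes the scenarios by which chords are reversed and sums $\tfrac12\sum_{k}\binom{n}{k}$. In the regime where the $\xi_i$ are linearly independent your argument is complete and arguably cleaner than the paper's. The problem is that you have not proved Lemma \ref{lem:geometric}: you explicitly stop at the degenerate case in which the affine span of the $\xi_i$ passes through the origin, and there your method genuinely fails to close, because for a feasible sign pattern the set of real solutions $\tau$ is a conic in the $2$-plane $\mathrm{span}(v(\epsilon),u)$, and nothing you say bounds the number of lattice points on such a conic by a constant independent of $\lj$ (circles and ellipses can carry arbitrarily many lattice points, and every lattice point on your ellipse automatically yields admissible $\eta_i=\xi_i-\epsilon_i\tau$ on the sphere). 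Since the hypothesis of the lemma is only that the $\xi_i$ form a codimension-one simplex --- which does not prevent their affine span from being a linear hyperplane --- this case lies squarely inside the statement and is in fact needed in the proof of Theorem \ref{thm:main}. An acknowledged ``technical heart'' left unproved is a gap, not a proof.

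For comparison, the paper sidesteps this particular degeneracy by working with difference vectors rather than position vectors: for a fixed sign pattern it notes that the points $\eta_1+\overline{\xi_1\xi_i}$ (with $\xi_i$ replaced by $-\xi_i$ at the reversed indices) must all lie on $S_{n-1,\lj}$, so that $\eta_1$ is confined to the intersection of the sphere with an affine line; the $n-1$ vectors $\xi_i-\xi_1$ are linearly independent by affine independence alone, irrespective of whether $\mathbf{0}$ lies in the affine span, and a line meets the strictly convex sphere in at most two points, one of which is the trivial solution $\eta_1=\xi_1$. You can import this fix into your framework for the all-equal-signs pattern by subtracting the $i=1$ equation from the others, which replaces your possibly singular matrix $A$ by the always-invertible system in the $\xi_i-\xi_1$. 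For mixed sign patterns, however, the subtracted system involves vectors of the form $\pm\xi_k-\xi_1$ whose linear independence is no longer automatic, so an additional argument is required there as well (a point on which the paper's own proof is also terse). As written, your proposal establishes the lemma only under the extra hypothesis that $\mathbf{0}$ does not lie in the affine span of $\xi_1,\dots,\xi_n$.
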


\begin{remark}\label{rem:lem}
Given $m>n$ points on ${\bf S}^{n-1}(\lj)\cap\Z^n$, we will still have the same
bound, $2^{n-1}$ on the number of possible $\btau$'s. In other words, adding
more points augments the number of restrictions, which, in principle, might
reduce the number of possibilities for the {\it different} $\btau$'s.
\end{remark}
\begin{remark}\label{rem:evalind}
We also notice that the bound we obtained is independent of the eigenvalue
$\lj$. This fact is crucial in the proof of Theorem \ref{thm:main}.
\end{remark}

The proof of Theorem \ref{thm:main} is done by strong induction, the base case
being done in \cite{Jak97} for the case of $n=3$ and in \cite{JNT01} for the
case of $n=4$. We will provide a proof for the case of $n=5$ first. This will
give a feeling of how the proof of the general case goes. 

\begin{proof}[Proof of theorem \ref{thm:main} for the case $n=5$]
The aim of the following calculations is to bound the sum
$\sum_{\tau}{|b_\tau|^5}$. Given \eqref{eq:normzeroterm}, we will
consider the sum with nonzero $\tau$:
\begin{eqnarray}
\label{eq:interchange}
\sum_{\tau\not=\bf 0}{|b_\tau|^5}&\leq&\sum_{\tau\not=\bf
0}\left(\sum_{\xi_j-\eta_j=\tau}\prod_{j=1}^5|a_{\xi_j}||a_{\eta_j}|\right)
\end{eqnarray}
The trick that we shall use is to bound the right-hand side of
\eqref{eq:interchange} by:
\begin{eqnarray}
\label{eq:boundedwith}
\sum_{\tau\not=0}{\sum_{\xi_i-\eta_i=\tau}{\frac12\left(\prod_{i=1}^5|a_{\xi_i}
|^2+\prod_ { i=1 } ^5{|a_{\eta_i}|}\right)}}
\end{eqnarray}
then, we interchange the order of summation in \eqref{eq:boundedwith} and
finally we use lemma \ref{lem:geometric} to obtain a finite upper bound.

In doing so, we will encounter several configurations of the points $\xi_i$'s on
${\bf S}^4(\lj)\cap\Z^5$. Each configurations needs to be studied separately. An
obvious case is when two or more points $\xi_i$ coincide, equation
\eqref{eq:interchange} reduces to,
\begin{eqnarray}
\label{eq:twocoincide}
\sum_{\btau\not=0}\sum_{\xi_0-\eta_0=\btau}|a_{\xi_0}|^2|a_{\eta_0}
|^2\left(\sum_{\xi_i-\eta_i=\xi_0-\eta_0}\left(\prod_{i=3}^5|a_{\xi_i}||a_{
\eta_i}|\right)\right)
\end{eqnarray}
and one can bound the terms $|a_{\xi_i}||a_{\eta_i}|$ inside the product of
\eqref{eq:twocoincide} by $\frac12(|a_{\xi_i}|^2+|a_{\eta_i}|^2)$. Then, we can
bound this case by,
\begin{equation}
\label{eq:n5otherbound}
\frac1{2^3}\;\sum_{\btau\not=0}\sum_{\xi_0-\eta_0=\btau}|a_{\xi_0}|^2|a_{
\eta_0}|^2 \left(\sum_{\xi,\eta\in S_{4,\lj}}|a_\xi|^2|a_{\eta}|^2\right)
\end{equation}
where the former is bounded by $\frac{1}{2^3}$.

Now, we may suppose that no two points coincide. We end up with five points in
$\R^5$. These points will either lie in a 4 dimensional affine subspace (where
they will form a 4-simplex), a 3 dimensional affine subspace or a 2 dimensional
affine subspace.

In the case where the points form a 4-simplex, we can use lemma
\ref{lem:geometric} and interchange the order of summation in
\eqref{eq:boundedwith} as follows,
\begin{equation}
\label{eq:4simplexbound}
\frac12\sum_{\xi_i\in S_{4,\lj}}\sum_{\btau\not=0}\sum_{\xi_i-\eta_i=\pm\btau}
{\left(\prod_{i=1}^5|a_{\xi_i}|^2+\prod_{i=1}^5|a_{\eta_i}|^2\right)}.
\end{equation}
The former will be bounded by
\begin{equation}
\label{eq:4simplexveryfinal}
\frac12\sum_{\xi_i\in S_{4,\lj}}2^4\cdot2\prod_{i=1}^5|a_{\xi_i}|^2
\end{equation}
which by the $L^2$ normalization will not exceed $2^4$.

In the case where the points $\xi_i$ lie in a 3 dimensional affine subspace
namely $\alpha$, they will form a codimension 2 simplex. There will be 3
different configurations that need to be considered.

The first case is when $\lbrace{\xi_i\rbrace}_{i={1\ldots5}}\in\alpha$ and at
least one of the $-\eta_i\not\in\alpha$. Without loss of generality, we may
suppose that $-\eta_5\not\in\alpha$. Then, the simplex formed by
$(\xi_1,\xi_2,\xi_3,\xi_4,-\eta_5)$ is a parallel translate of the simplex
formed by $(\eta_1,\eta_2,\eta_3,\eta_4,-\xi_5)$ and these simplices do {\it
not} lie in a 3-dimensional subspace. They form a non-degenerate 4-simplex.
Hence, we are reduced to the case just studied above and we obtain the same
bound, that is, $2^4$.

In the next case, we suppose that the points
$\lbrace{\xi_i\rbrace}\in\alpha$,
$\lbrace{-\eta_i\rbrace}\in\alpha$ but
$\lbrace{\eta_i\rbrace}\not\in\alpha$ for all $i=1\ldots5$. The trick we will
be using is to consider the subspace that contains both $\alpha$ and $\eta_1$
say, namely $\gamma$. The subspace $\gamma$ is a 4 dimensional subspace that
contains $\bf 0$ since both $\eta_1$ and $-\eta_1$ lie in $\gamma$. Thus,
$\gamma\cap {\bf S}^4(\lj)$ is the great 3-sphere, where the great $k$-sphere
is defined to be the intersection of ${\bf S}^n(\lj)$ with a $k $ dimensional
hyperplane passing through the origin. Hence, by lemma \ref{lem:geometric} and
remark \ref{rem:lem}, we have the same bound on the number of $\btau$'s as to
have 4 points on $S_{3,\lj}$, and this will lead to a bound of $2^3$.

The last scenario that needs to be considered in the case where
$\lbrace{\xi_i\rbrace}_{i=1\ldots5}\in\alpha$ is when
$\lbrace{-\eta_i\rbrace}_{i=1\ldots5}\in\alpha$ and at least one of the
$\eta_i\in\alpha$, say $\eta_1$. Since both $\eta_1$ and $-\eta_1$ are in
$\alpha$, ${\bf 0}\in\alpha$ and all of $\pm\eta_i\,\,,\pm\xi_i\in\alpha$.
Hence, $\alpha\cap {\bf S}^4(\lj)$ is the great 2-sphere. Once again, lemma
\ref{lem:geometric} and remark \ref{rem:lem} will lead us to a bound that is
equal to $2^2$.

It may happen that the points lie in a 2-dimensional affine subspace say,
$\beta$. We will study the possible cases in the same manner we did previously.
In the first case, we suppose that $\{\xi_i\}_{i=1\ldots5}\in\beta$ with
$\{-\eta_i\}\in\beta$ for all $i$. We consider the 3-dimensional subspace
$\gamma_1$ that contains both $\beta$ and $\eta_1$ say. Then,
${\bf0}\in\gamma_1$, which implies that $\pm\eta_i,\pm\xi_i$ all lie in
$\gamma_1\cap {\bf S}^4(\lj)$, which is the great 2-sphere. We are back in
one of the cases studied previously and once again, lemma \ref{lem:geometric}
and remark \ref{rem:lem} will guarantee us a bound of $2^2$.

In the very last case, we lose a bit of control on where the $\eta_i$ might be.
We let $\xi_i\in\beta$, but at least one of the $-\eta_i\not\in\beta$, $-\eta_5$
say Then, the points $\{\xi_1,\xi_2,\xi_3,\xi_4,\eta_5\}$ lie in a 3-dimensional
affine subspace and we are back to case where the $\xi_i\in\alpha$. Hence, we
have a total bound equal to $2^2+2^3+2^4=28$.

Summing all the bounds, we obtain $C(n)\approx2.384729...$
\end{proof}

\begin{proof}[Proof of the general case]
We shall now turn into the proof of the general case, that is, the sum
\eqref{eq:nSum} given below is convergent for any $n$. The proof is done by
strong induction. That is, we suppose that the sum \eqref{eq:nSum} is bounded
in any dimension $k<n$.
\begin{eqnarray}\label{eq:nSum}
\sum_{\btau\in\Z^n\cap {\bf S}^{n-1}(\lj)}|b_\btau|^n&=&
1+\sum_{0\not=\btau\in\Z^n\cap {\bf S}^{n-1}(\lj)}|b_\btau|^n
\end{eqnarray}
As in the proof of the $n=5$ case, we have,
\begin{equation}
\label{eq:nproductnterms}
\sum_{\btau\not=\bf0}|b_\btau|^n\leq\sum_{\btau\not=\bf0}\sum_{
\xi_i-\eta_i=\btau }\prod_{i=1}^n|a_{ \xi_i } ||a_ { \eta_i } |
\end{equation}
The same trick is used as before, that is, we will bound the right-hand side
of \eqref{eq:nproductnterms} by \eqref{eq:nestimatenterms}, then interchange
the order of summation in the latter, and finally use Lemma \ref{lem:geometric}
to obtain a finite upper bound.

\begin{equation}
\label{eq:nestimatenterms}
\sum_{\btau\not=\bf0}\sum_{{\xi_i}-\eta_i=\btau}\frac{1}{2}\left(\prod_{i=1}
^n|a_{\xi_i}|^2+\prod_{
i=1}^n|a_{\eta_i}|^2\right)
\end{equation}

Once again, several cases need to be studied. We will do so in the same manner
as for the $n=5$ case. Instead of 5 points, we now have $n$ points
$\lbrace\xi_i\rbrace_{i=1}^n$ on the surface of the sphere ${\bf S^{n-1}}(\lj)$

The trivial case where two or more points coincide gives a bounded contribution
to the sum \eqref{eq:nSum} that is equal to $\frac1{2^{n-2}}$ by the same
computations done in the $n=5$ case. In the subsequent cases, we may assume that
no two points $\xi_i$ coincide. 

The second trivial case is when the points $\lbrace\xi_i\rbrace$ form a
non-degenerate codimension-1 simplex. A change of order of summation in
\eqref{eq:nestimatenterms} and Lemma \ref{lem:geometric} yield a bound
equal to $2^{n-1}$. 

The non trivial cases are when the points $\lbrace\xi_i\rbrace$ lie in smaller
subspaces. Providing an upper bound to each of these cases finishes the proof.

The first of such non trivial cases is when the points $\lbrace\xi_i\rbrace$ lie
in a $(n-2)$ dimensional affine subspace, namely $\alpha_{n-2}$. Let us suppose
$\{\xi_i\}_{i=1}^n\in\alpha_{n-2}$ with all the
$\{-\eta_i\}\in\alpha_{n-2}$ as well. If either one of the $\eta_i$'s or
$-\xi_i$'s is an element of $\alpha_{n-2}$, then the origin ${\bf
0}\in\alpha_{n-2}$, which implies that $\alpha_{n-2}\cap{\bf S}^{n-1}(\lj)$ is
the great $(n-2)$-sphere. Hence, we have $n$ points on $S_{n-2,\lj}$ and by the
induction hypothesis, this gives us a bounded contribution to the sum 
\eqref{eq:nSum}. Suppose now that none of the $\eta_i$'s or $-\xi_i$'s is an
element of $\alpha_{n-2}$. Then, we consider the subspace $\beta_{n-2}$
containing both $\alpha_{n-2}$ and $\eta_1$ say. We get an $(n-1)$-dimensional
subspace including $\bf 0$, and $\beta_{n-2}\cap {\bf S}^{n-1}(\lj)$ is the
great $(n-2)$-sphere. Remark \ref{rem:lem} implies that the resulting case is
one of the cases in our induction hypothesis and this gives a bounded
contribution to the sum  \eqref{eq:nSum}.

In order to prove it for the rest of the cases; i.e., when the points
$\{\xi_i\}$ lie in a $(n-k)<(n-2)$ dimensional affine subspace, namely
$\alpha_{n-k}$, we will use a {\it second} (reversed) induction on the dimension
of the affine subspace $\alpha_{n-k}$ where the points $\{\xi_i\}$ might lie.
That is, assuming we have a bounded contribution from all the $\alpha_{n-k+1}$ 
for some $k$ with $3<k<(n-1)$, we will prove that we have a bounded contribution
from the case where the $\{\xi_i\}\in\alpha_{n-k}$. Once again, we have the two
subcases depending on whether or not $-\eta_j$ belong to $\alpha_{n-k}$. 

For the first subcase, we may assume, without loss of generality,  that
$-\eta_1\not\in\alpha_{n-k}$. Then, the simplex $(-\eta_1,\xi_2,\ldots,\xi_n)$
is a parallel translate of $(-\xi_1,\eta_2,\ldots,\eta_n)$ and the last two
simplices lie in a $(n-k+1)$-dimensional subspace. Hence, we are reduced to the
{\it second} induction hypothesis which yields a bounded contribution to the
sum \eqref{eq:nSum}.

Let us now turn our attention to the second subcase: if all the
$\{\xi_i\}_{i=1}^n$ and $\{-\eta_i\}_{i=1}^n$ lie in $\alpha_{n-k}$ with {\it
none} of the $\eta_i$'s in $\alpha_{n-k}$, we consider the subspace
$\beta_{n-k}$ containing both $\alpha_{n-k}$ and $\eta_1$ say. This is a
$(n-k+1)$-dimensional subspace that includes $\bf 0$. We can see that 
$\beta_{n-k}\cap {\bf S}^{n-k+1}(\lj)$ is the great $(n-k)$-sphere. Hence, we
have $n$ points on $S_{n-k,\lj}$ and by the strong {\it first} induction
hypothesis, we obtain a finite contribution from this subcase to the sum
\eqref{eq:nSum}.

We note that if all the $\{\xi_i\}_{i=1}^n$ and $\{-\eta_i\}_{i=1}^n$ lie in
$\alpha_{n-k}$ with { at least one} of the $\eta_i$'s in $\alpha_{n-k}$, then 
${\bf 0}\in\alpha_{n-k}$ and $\alpha_{n-k}\cap {\bf S}^{n-1}(\lj)$ is the great
$(n-k-1)$-sphere and this case gives a bounded contribution to the sum
\eqref{eq:nSum} by once again the strong {\it first} induction hypothesis.

We have exhausted all the possible cases, each giving a bounded contribution to
the sum \eqref{eq:nSum}. Therefore, the sum is bounded and this finishes the
proof of the conjecture in \cite{Jak97}.
\end{proof}

We may now provide a proof for the geometric Lemma \ref{lem:geometric}. 

\begin{proof}[Proof of lemma \ref{lem:geometric}]
Suppose we are given $\{\xi_i\}_{i=1}^n$, $n$ points on $S_{n-1,\lj}$, no two of
which are diametrically opposite, and such that the simplex with vertices
$\{\xi_i\}_{i=1}^n$ is non-degenerate. That is, the points $\{\xi_i\}_{i=1}^n$
cannot be in any (affine) subspace of dimension {\it strictly} less than $n-1$.
Then, given $n$ equal parallel ``chords" $\{{\bf v_i}\}_{i=1}^n$ of
$S_{n-1,\lj}$ (not equal to $\overline{\xi_i\xi_j}, \forall i,j$) such that
$\xi_i$ is an endpoint of $\bf v_i$, we denote the other endpoint of ${\bf v_i}$
by $\eta_i$ and the diametrically opposite points of $\xi_i$ (respectively
$\eta_i$) by $\xi_i'$ (respectively $\eta_i'$). The question we would like to
pose is: {\it where on $S_{n-1,\lj}$ can $\{\eta_i\}_{i=1}^n$ lie?} We will see
that there are {\it finitely} many places where the $\{\eta_i\}_{i=1}^n$ can be.
In fact, there are $\left[\frac{n}{2}\right]$ different scenarios, and we will
study each of them. 

If $\overline{\xi_i\eta_i}$ are equal $\forall i$, then $\eta_1=\eta_i+
\overline{\xi_i\xi_1}$ for all $i=1\ldots n$. Hence, the points
$\eta_1+\overline{\xi_1\xi_i}$ lie on $S_{n-1,\lj}$ $\forall i$. Since ${\bf
S}^{n-1}(\lj)$ is strictly convex, there is at most {\it one} point (other than
$\xi_1$), namely $\eta_1$, for which the points $\eta_1+\overline{\xi_1\xi_i}$
for all $i=1\ldots n$ lie on $S_{n-1,\lj}$.

In the next scenario, we suppose $\overline{\xi_i\eta_i}$ are equal for all $i$,
except at one point $k$, where $\overline{\xi_i\eta_i}= \overline{\eta_k\xi_k}$.
Then, the points $\eta_1+ \overline{\xi_1\xi_i}$ for all $i\not=k$ and $\eta_1+
\overline{\xi_1\xi_k'}$ lie on $S_{n-1,\lj}$. Again, by the convexity of ${\bf
S}^{n-1}(\lj)$ and the fact that $\{\xi_i\}$ form a codimension-1 simplex, there
is at  most one point (other than $\xi_1$), namely $\eta_1$, for which the
points $\eta_1+\overline{\xi_1\xi_i}$ for $i\not=k$ and $\eta_1+
\overline{\xi_1\xi_k'}$ lie on $S_{n-1,\lj}$. However, the last equation gives
us at most $one$ possibility for $\eta_1$ for every $k=1\ldots n$. Hence, we
have a total of $n= {n\choose 1}$ possibilities for $\eta_1$.

In the next case, we assume $\overline{\xi_i\eta_i}$ are equal for all
$i\not=k,l$, where $\overline{\xi_i\eta_i}=\overline{\eta_k\xi_k}=
\overline{\eta_l\xi_l}$. Here again, $\eta_1=\eta_i+\overline{\xi_i\xi_1}$
for all $i\not=k,l$ and $\eta_1=\eta_k'+\overline{\xi_k'\xi_1}=\eta_l'+
\overline{\xi_l'\xi_1}$, making the points $\eta_1+\overline{\xi_1\xi_i}$
for $i\not=k,l$, $\eta_1+\overline{\xi_1\xi_k'}$ and $\eta_1+
\overline{\xi_1\xi_l'}$
lie on $S_{n-1,\lj}$. The convexity of ${\bf S}^{n-1}(\lj)$ implies the
uniqueness of such $\eta_1\not=\xi_1$ for every pair $k,l$. Hence, we have $n
\choose 2$ possibilities for $\eta_1$ in this scenario.

Similarly, we will get $n \choose 3$ for the next and so on, until $n \choose
n$. However, the $n \choose n$ case is the same as the very first case $n\choose
0$ in which we will simply change the sign of all the vectors
$\overline{\xi_i\eta_i}$. The $({n-1})^{\text{th}}$ scenario is similar to the
second scenario, and so on; hence, counting twice every case. The total number
of possibilities will be the sum of the possibilities in every scenario and is:
\begin{eqnarray}
\frac12\sum_{k=0}^n {n\choose k}=2^{n-1}
\end{eqnarray}
\end{proof}

The bound follows from the proof of Theorem \ref{thm:main}, and use the bounds
given by Lemma~\ref{lem:geometric}. We do not claim that $C(n)$ is a sharp bound. In fact we suspect that one can improve the bound obtained in Lemma~\ref{lem:geometric} and get a better final bound that would approach 1 in the limit. In our setting, and for $n\geq5$ the result will be:
\begin{equation}
\label{eq:finalbound}
C(n)=\left(2^{2-n}+\left(\frac{5n}4-4\right)2^n+5\right)^{1/n}
\end{equation}
It is clear that $C(n)\to2$ as $n\to\infty$.


\begin{thebibliography}{MA}

\bibitem[Av]{Av}
V.~Avakumovi$\check{\text{c}}$. {\em \"Uber die eigenfunktionen auf
geschlossenen Riemannschen mannigfaltigkeiten.} Math. Z. 65, (1956), 327--344.

\bibitem[Ais]{Ais09}
T.~A\"issiou. {\em Semiclassical limits of eigenforms and eigenfunctions
on $n$-dimensional Tori.} M.~Sc.~Thesis. McGill University, 2009.

\bibitem[Bour]{Bou93}
J.~Bourgain. {\em Eigenfunction bounds for the Laplacian on the
$n$-torus.} Intern. Math. Res. Notices, 3 (1993), 61--66.

\bibitem[BR1]{BR09}
J.~Bourgain, Z.~Rudnick. {\em Restrictions of toral eigenfunctions to
hypersurfaces.} C. R. Math. Acad. Sci. Paris, 347 (2009), no. 21-22,
1249--1253.

\bibitem[BR2]{BR10}
J.~Bourgain, Z.~Rudnick. {\em On the nodal sets of toral eigenfunctions.}
ArXiv:1003.1743.

\bibitem[Con]{Con76} B.~Connes. {\em  Sur les coefficients des s\'eries
trigonom\'etriques convergents sph\'eriquement.} C. R. Acad. Sc.
Paris, 283A (1975), 159--161.

\bibitem[DG]{DG} J.~J.~Duistermaat, V.~W.~Guillemin. {\em  The spectrum of
positive elliptic operators and periodic bicharacteristics.} Invent. Math., 29
(1975), no. 1, 39--79.

\bibitem[Ho1]{H-IV83} L.~H\"ormander, {The analysis of linear partial
differential operators, Volume IV, Springer-Verlag Berlin Heidelberg, 1985.}

\bibitem[Ho2]{H2} L.~H\"ormander, {\em The spectral function of an elliptic
operator}, Acta. Math. 121 (1968), 193--218,

\bibitem[Jak]{Jak97} D.~Jakobson. {\em Quantum limits on flat tori},
Annals of Mathematics, 145 (1997), 235--266.

\bibitem[JNT]{JNT01} D.~Jakobson, N.~Nadirashvili and J.~Toth:
{\em Geometric properties of eigenfunctions.} Russian Math Surveys
56(6), (2001), 1085--1106.

\bibitem[Lev]{Lev} B.~M.~Levitan. {\em On the asymptotic behavior of the 
spectral function of a self-adjoint differential equation of the second order.}
Izv. Akad. Nauk SSSR Ser. Mat., 16 (1952) no. 4, 325--352

\bibitem[Moc]{Moc96} G.~Mockenhaupt. {\em Bounds in lebesgue spaces of
oscillatory integral operators.} Habilitationsschrift, Univ. Siegen,
Siegen 1996.

\bibitem[Saf1]{Saf1} Y.~Safarov.  {\em Asymptotic behavior of the spectrum of a
pseudodifferential operator with periodic bicharacteristics.} J. of Math. Sci.
40 (1988), no. 5, 645--652.

\bibitem[Saf2]{Saf2} Y.~Safarov.  {\em Asymptotic of the spectral function of a
positive elliptic operator without the nontrap condition.} Funct. Anal. App.
22 (1988), no. 3, 213--223.

\bibitem[SV]{Safbook} Y.~Safarov, D. Vassiliev {The asymptotic distribution
of eigenvalues of partial differential operators, Translations of Mathematical
Monographs, vol. 155, American Mathematical Society, Providence, RI, 1997.}

\bibitem[Sog]{Sog88} C.~Sogge.  {\em Concerning the $L^p$ norm of spectral
clusters for second-order elliptic operators on compact manifolds.}
J. Funct. Anal. 77 (1988), 123--138.

\bibitem[STZ]{STZ09} C.~Sogge, J.~Toth and S.~Zelditch  {\em About the blowup
of quasimodes on Riemannian manifolds.} ArXiv:0908.0688.

\bibitem[SZ]{SZ02} C.~Sogge and S.~Zelditch  {\em Riemannian manifolds with
maximal eigenfunction growth.} Duke Math. J. 114 (2002), no. 3, 387--437.

\bibitem[TZ]{TZ02} J.~Toth and S.~Zelditch  {\em Riemannian manifolds with
uniformly bounded eigenfunctions.} Duke Math. J. 111 (2002), no. 1, 97--132.

\bibitem[Zyg]{Zyg74} A.~Zygmund. {\em On Fourier coefficients and transforms
of functions of two variables.} Studia Mathematica 50 (1974),
189--201.


\end{thebibliography}
\end{document}